\def\doctype{}
\renewcommand\S{\mathrm{S}}
\newcommand{\comment}[1]{}
\newcommand{\ind}{\hspace{-0.07cm}\uparrow^G}
\newcommand{\res}{\hspace{-0.07cm}\downarrow_H}
\def\cn{\mathord{{\!\:{:}\:\!}}}
\let\oldsection\section
\newcommand\boldsection[1]{\oldsection{\bf #1}}
\newcommand\starsection[1]{\oldsection*{\bf #1}}
\renewcommand\section{\@ifstar\starsection\boldsection}
\newtheoremstyle{algorithm}
  {12pt}		  
  {0pt}  
  {\tt}  
  {\parindent}     
  {\bf}  
  {. }    
  {\newline}    
  {}     
\theoremstyle{algorithm}
\newtheoremstyle{theorem}
  {12pt}		  
  {0pt}  
  {\sl}  
  {\parindent}     
  {\bf}  
  {. }    
  { }    
  {}     
\theoremstyle{theorem}
\newtheorem{thm}{Theorem}[section]  
\newtheorem{cor}[thm]{Corollary}
\newtheorem{conj}[thm]{Conjecture}
\newtheorem{prop}[thm]{Proposition}
\newtheoremstyle{definition}
  {12pt}		  
  {0pt}  
  {}  
  {\parindent}     
  {\bf}  
  {. }    
  { }    
  {}     
\theoremstyle{definition}
\renewcommand{\proofname}{Proof}
\renewenvironment{proof}[1][\proofname]{\par
  \pushQED{\qed}%
  \normalfont \partopsep=\z@skip \topsep=\z@skip
  \trivlist
  \item[\hskip\labelsep
        \scshape
    #1\@addpunct{.}]\ignorespaces
}{%
  \popQED\endtrivlist\@endpefalse
}
\renewcommand*\@maketitle{%
  \normalfont\normalsize
  \@adminfootnotes
  \@mkboth{\@nx\shortauthors}{\@nx\shorttitle}%
  \global\topskip0\p@\relax 
  \@settitle
  \ifx\@empty\authors \else {\vskip 1em
\vtop{\centering\shortauthors\@@par}} \fi
  \ifx\@empty\@date \else {\vskip 1em \vtop{\centering\@date\@@par}}\fi 
  \ifx\@empty\@dedicatory
  \else
    \baselineskip18\p@
    \vtop{\centering{\footnotesize\itshape\@dedicatory\@@par}%
      \global\dimen@i\prevdepth}\prevdepth\dimen@i
  \fi
  \@setabstract
  \normalsize
  \if@titlepage
    \newpage
  \else
    \dimen@34\p@ \advance\dimen@-\baselineskip
    \vskip\dimen@\relax
  \fi
} 
\renewcommand*\@adminfootnotes{%
  \let\@makefnmark\relax  \let\@thefnmark\relax
  \ifx\@empty\@subjclass\else \@footnotetext{\@setsubjclass}\fi
  \ifx\@empty\@keywords\else \@footnotetext{\@setkeywords}\fi
  \ifx\@empty\thankses\else \@footnotetext{%
    \def\par{\let\par\@par}\@setthanks}%
  \fi
\thispagestyle{titlepage}
}
\begin{document}
\title[]{\large A character theoretic\\ formula for base size II}

\author{Coen del Valle}
\address{School of Mathematics and Statistics,
The Open University, Milton Keynes, UK
}
\email{Coen.del-Valle@open.ac.uk}

\thanks{The author thanks Martin Liebeck for a helpful early conversation, and Peter Cameron for his comments on a draft. Research of Coen del Valle is supported by the Natural Sciences and Engineering Research Council of Canada [funding reference number PGSD-577816-2023], and by the Engineering and Physical Sciences Research Council
[grant number EP/Z534742/1].}
\keywords{base size, irreducible character, K\"ulshammer graph}

\date{\today}

\begin{abstract}
A base for a permutation group $G$ acting on a set $\Omega$ is a sequence $\mathcal{B}$ of points of $\Omega$ such that the pointwise stabiliser $G_{\mathcal{B}}$ is trivial. The base size of $G$ is the size of a smallest base for $G$. Extending the results of a recent paper of the author, we prove a 2013 conjecture of Fritzsche, K\"ulshammer, and Reiche. Moreover, we generalise this conjecture and derive an alternative character theoretic formula for the base size of a certain class of permutation groups. As a consequence of our work, a third formula for the base size of the symmetric group of degree $n$ acting on the subsets of $\{1,2,\dots, n\}$ is obtained.
\end{abstract}

\maketitle
\hrule

\bigskip

\section{Introduction}
A \emph{base} for a permutation group $G$ acting on a finite set $\Omega$ is a sequence $\mathcal{B}$ of points of $\Omega$ with trivial pointwise stabiliser $G_\mathcal{B}$. The size $b(G)$ of a smallest base for $G$ is called the \emph{base size} of $G$. In 1992, Blaha ~\cite{blaha} showed that the problem of finding a minimum base for an arbitrary group $G$ is NP-hard. Despite this, much work has been done towards determining the base size of certain families of groups, especially primitive groups; we recommend the paper of Mar\'oti~\cite{Maroti23} for a survey of many significant results in the area.

Let $n$ and $k$ be positive integers with $n\geq 2k$ and let $\S_{n,k}$ be the symmetric group $\S_n$ acting on the $k$-element subsets of $\{1,2,\dots,n\}$. The group $\S_{n,k}$ is primitive provided that $n>2k$. In 2013, the base size of $\mathrm{S}_{n,k}$ was determined by Fritzsche, K\"ulshammer, and Reiche~\cite[Lemma 3.3]{fkr} in a beautiful result which appears to have either gone unnoticed, or forgotten in the literature --- one purpose of this paper is to rectify this oversight. Over 10 years later, two independent papers~\cite{basesn,MeSp} were published, once again determining formulae for $b(\mathrm{S}_{n,k})$; the formula in~\cite[Theorem 1.1]{basesn} is the same as that which appears in~\cite{fkr}, but the formula of Mecenero and Spiga~\cite[Theorem 1.1]{MeSp} takes a remarkably different form. During the review process, it was noted that the result~\cite[Theorem 1.1]{MeSp} has an entirely character theoretic interpretation. In particular, after some straightforward algebraic manipulation one derives that if $\mathrm{sgn}$ is the sign character of $\S_n$ and $\chi$ is the permutation character of $\S_{n,k}$, then $$b(\S_{n,k})=\min\{l\in \mathbb{N} : \langle \mathrm{sgn},\chi^l\rangle\ne0\}.$$

A recent paper of the author~\cite{char} exhibited an entirely algebraic proof of the above formula, and extended it to all groups admitting a \emph{base-controlling} homomorphism. We say that {${\phi:G\to\{1,-1\}}$} is \emph{base-controlling} if for every tuple $\mathcal{A}$ of points of $\Omega$, $\mathcal{A}$ is a base if and only if $\phi(G_{\mathcal{A}})=1$. Note that any base-controlling homomorphism of $G$ is an irreducible character of $G$. Define $\langle - , - \rangle$ to be the standard inner product of (complex-valued) class functions. That is, for class functions $\varphi_1,\varphi_2$ of $G$, $$\langle\varphi_1,\varphi_2\rangle=|G|^{-1}\sum_{g\in G}\varphi_1(g)\overline{\varphi_2(g)}.$$ One main result of~\cite[Theorem 1.2]{char} is that if $\chi$ is the permutation character of a group $G$ which admits a base-controlling homomorphism $\phi$, then \begin{equation}\label{charform1}
    b(G)=\min\{l\in\mathbb{N} :\langle \phi,\chi^l\rangle\ne0\}.
\end{equation}

The paper~\cite{fkr} concludes with a lovely conjecture, suggesting an alternative character theoretic formula for the base size of $\mathrm{S}_{n,k}$. Before stating their conjecture, we first give the necessary background.

%
Let $G$ be a finite permutation group with point stabiliser $H$. Define the \emph{K\"ulshammer graph} $\mathcal{K}(G,H)$ to have vertex set $\mathrm{Irr}(H)$ where $\alpha$ is adjacent to $\beta$ if and only if the induced characters $\alpha\ind$ and $\beta\ind$ have a common irreducible constituent. By~\cite[Corollary~6.3]{bkk}, the graph $\mathcal{K}(G,H)$ is connected; for any $\alpha,\beta\in\mathrm{Irr}(H)$, define $d(\alpha,\beta)$ to be the length of a shortest path from $\alpha$ to $\beta$ in $\mathcal{K}(G,H)$. Define the \emph{diameter} of $\mathcal{K}(G,H)$ to be $\mathrm{Diam}(\mathcal{K}(G,H))=\max_{\alpha,\beta\in\mathrm{Irr}(H)}d(\alpha,\beta).$ 

The K\"ulshammer graph has been studied several times in the past (see e.g.~\cite{bkk,burn,fkr}), primarily for its utility in determining the \emph{depth} of a subgroup $H$ in a group $G$. The definition of this notion of depth is beyond the scope of this paper but we point the interested reader to \cite{bdk,bkk,burn,fkr} for further discussion and to~\cite[Proposition 2.5]{burn} for a nice collection of facts relating the combinatorial information of the K\"ulshammer graph to depth.

We are now ready to present the conjecture of Fritzsche, K\"ulshammer, and Reiche~\cite{fkr}; throughout this paper we use $1_H$ to denote the trivial character of a group $H$, and use $\hspace{0.07cm}\ind$ and $\hspace{0.07cm}\res$ to denote the induction and restriction of characters, respectively.

\begin{conj}[\cite{fkr}]\label{mainconj}
    Let $n\geq 2k$, let $G=\mathrm{S}_{n,k}$, and let $H$ be a point stabiliser of $G$. Then $$b(G)=\mathrm{Diam}(\mathcal{K}(G,H))+1=d(1_H,\mathrm{sgn}\res)+1.$$
\end{conj}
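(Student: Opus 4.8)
The plan is to establish the chain of equalities in three linked steps, using the character‑theoretic formula \eqref{charform1} as the bridge between the base size and the combinatorics of the K\"ulshammer graph. First, one needs to check that $\S_{n,k}$ genuinely admits a base‑controlling homomorphism. The natural candidate is $\phi=\mathrm{sgn}$, the sign character of $\S_n$; I would verify that for a tuple $\mathcal{A}$ of $k$‑subsets, the pointwise stabiliser $G_{\mathcal{A}}$ is trivial if and only if it contains no transposition — equivalently, if and only if $\mathrm{sgn}$ is non‑trivial on $G_{\mathcal{A}}$ whenever $G_{\mathcal{A}}$ is non‑trivial — using the fact that a stabiliser of a family of subsets is a Young‑type subgroup (a direct product of symmetric groups on the "atoms" of the partition the family induces), and such a group is trivial precisely when all atoms are singletons, precisely when it contains no transposition. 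Granting this, $\mathrm{sgn}\res$ (restricted to the point stabiliser $H=\S_{k}\times\S_{n-k}$) is a linear, hence irreducible, character of $H$, so it is a legitimate vertex of $\mathcal{K}(G,H)$, and \eqref{charform1} gives $b(G)=\min\{l:\langle\mathrm{sgn},\chi^l\rangle\ne0\}$.

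The second step is the heart of the matter: relating $\min\{l:\langle\mathrm{sgn},\chi^l\rangle\ne0\}$ to the graph distance $d(1_H,\mathrm{sgn}\res)$ in $\mathcal{K}(G,H)$. By Frobenius reciprocity, $\langle\mathrm{sgn},\chi^l\rangle=\langle\mathrm{sgn},(1_H\ind)^l\rangle$, and expanding $(1_H\ind)^l$ via the standard identity $\chi\cdot(\psi\ind)=((\chi\res)\cdot\psi)\ind$ one rewrites this inner product as $\langle\mathrm{sgn}\res,(1_H\ind\res)^{\,l-1}\cdot 1_H\rangle_H$ or, more usefully, as a sum over walks in the graph whose vertices are irreducible characters of $H$: $\langle\mathrm{sgn},\chi^l\rangle\ne0$ if and only if there is a sequence $1_H=\alpha_0,\alpha_1,\dots,\alpha_l=\mathrm{sgn}\res$ of irreducible characters of $H$ with $\langle\alpha_i\ind,\alpha_{i+1}\ind\rangle\ne0$ for each $i$ — that is, a walk of length $l$ from $1_H$ to $\mathrm{sgn}\res$ in $\mathcal{K}(G,H)$. (This equivalence, together with the non‑negativity of the relevant multiplicities, is exactly the mechanism by which the K\"ulshammer graph encodes tensor powers of the permutation character; I expect it is essentially \cite[Proposition 2.5]{burn} or a minor variant, and I would cite it rather than reprove it.) Since a walk of length $l$ between two vertices exists for some $l$ if and only if $l\ge d(\alpha,\beta)$ once one checks parity is not an obstruction — and here it is not, because $\mathcal{K}(G,H)$ always has a loop at $1_H$ (as $\langle 1_H\ind,1_H\ind\rangle\ne0$), so walks can be padded by $1$ — the minimal such $l$ is exactly $d(1_H,\mathrm{sgn}\res)+1$. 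This yields $b(G)=d(1_H,\mathrm{sgn}\res)+1$.

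The third step is to show $d(1_H,\mathrm{sgn}\res)=\mathrm{Diam}(\mathcal{K}(G,H))$, i.e. that $1_H$ and $\mathrm{sgn}\res$ are a diametrically opposed pair. For the inequality $d(1_H,\mathrm{sgn}\res)\le\mathrm{Diam}$ there is nothing to prove; the work is the reverse. Here I would exploit that $H=\S_k\times\S_{n-k}$ and that irreducible characters of $H$ are pairs $(\lambda,\mu)$ of partitions; twisting by $\mathrm{sgn}\res$ conjugates partitions, and induction to $\S_n$ is governed by Littlewood–Richardson combinatorics (or, at the level of adjacency in $\mathcal{K}(G,H)$, by which pairs of $H$‑characters share a constituent of $1_H\ind=\bigoplus_{\nu}\chi^\nu$ over partitions $\nu$ with at most... the appropriate two‑rowed shape when $k$ is small, or the general branching rule). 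The key point is that adjacency moves you only a bounded "combinatorial distance", and the trivial and sign characters sit at the two extremes of the relevant poset, so any geodesic realising the diameter can be transported to a geodesic from $1_H$ to $\mathrm{sgn}\res$. The hard part of the whole argument is precisely this last step — controlling the diameter of $\mathcal{K}(G,H)$ and certifying that the pair $(1_H,\mathrm{sgn}\res)$ is extremal — since the first two steps are formal once the base‑controlling property and the walk‑counting lemma are in hand, whereas pinning down the diameter requires genuine input about the representation theory of $\S_n$ and the structure of the branching graph restricted to two‑part‑dominated characters. I would expect the paper to prove a more general statement (for arbitrary $G$ with a base‑controlling homomorphism $\phi$, that $b(G)=d(1_H,\phi\res)+1$, with the diameter equality holding in the special case of $\S_{n,k}$), isolating the combinatorial diameter computation as the one genuinely $\S_{n,k}$‑specific ingredient.
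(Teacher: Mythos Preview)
Your Steps 1 and 2 are close to the paper's approach. The paper also reduces to the formula $b(G)=\min\{l:\langle\phi,\chi^l\rangle\ne0\}$ and then relates $\langle\phi,\chi^l\rangle$ to walks in $\mathcal{K}(G,H)$: its Proposition~2.1 is precisely the ``one direction'' of your walk-counting, proved by the same Frobenius reciprocity / $\alpha\res\ind=\alpha\cdot\chi$ manipulation you describe. (A small wrinkle: your biconditional has an off-by-one. One has $\langle\phi,\chi^l\rangle=\langle\phi\res,\chi^{l-1}\res\rangle=(M^{l-1})_{1_H,\phi\res}$, so $\langle\phi,\chi^l\rangle\ne0$ corresponds to a walk of length $l-1$, not $l$; your conclusion that the minimal $l$ equals $d+1$ is correct, but it does not follow from the equivalence as you stated it.)

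Where you diverge from the paper is Step 3, and this is the one place your plan goes wrong. You identify the equality $\mathrm{Diam}(\mathcal{K}(G,H))=d(1_H,\mathrm{sgn}\res)$ as ``the hard part'', requiring Littlewood--Richardson combinatorics specific to $\S_{n,k}$, and you only sketch vaguely how one might ``transport'' an arbitrary geodesic to one between $1_H$ and $\mathrm{sgn}\res$. The paper bypasses this entirely. It quotes from \cite[Corollary~1.4]{fkr} the \emph{general} inequality $b(G)\ge\mathrm{Diam}(\mathcal{K}(G,H))+1$, valid for any finite permutation group with point stabiliser $H$ (this comes from the relationship between depth and base size). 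Combining this with the trivial $\mathrm{Diam}(\mathcal{K}(G,H))\ge d(1_H,\phi\res)$ and with $d(1_H,\phi\res)+1\ge b(G)$ from Proposition~2.1 and \eqref{charform1} yields the sandwich
\[
b(G)\ \ge\ \mathrm{Diam}(\mathcal{K}(G,H))+1\ \ge\ d(1_H,\phi\res)+1\ \ge\ b(G),
\]
forcing equality throughout. So the diameter equality is \emph{not} an $\S_{n,k}$-specific phenomenon; it holds for every group with a base-controlling homomorphism, and no partition combinatorics are needed. Your final paragraph anticipates that the paper proves $b(G)=d(1_H,\phi\res)+1$ in general but the diameter equality only for $\S_{n,k}$ --- in fact both are general, and the ingredient you are missing is precisely the cited lower bound $b(G)\ge\mathrm{Diam}(\mathcal{K}(G,H))+1$.
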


In this paper we settle Conjecture~\ref{mainconj}; our main result is the following, which gives us another character theoretic formula for base size, distinct from that of~\cite{char}.

\begin{thm}\label{mainthm}
    Let $G$ be a finite permutation group with point stabiliser $H$. Suppose that $G$ admits a base-controlling homomorphism. Then $$b(G)=\mathrm{Diam}(\mathcal{K}(G,H))+1=d(1_H,\phi\res)+1.$$
\end{thm}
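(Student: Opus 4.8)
The plan is to connect the combinatorial quantity $d(1_H,\phi\res)$ to the character-theoretic formula~\eqref{charform1}, which already gives $b(G)=\min\{l\in\mathbb N:\langle\phi,\chi^l\rangle\ne0\}$. The key observation is that powers of the permutation character $\chi$ decompose according to walks in the Külshammer graph. Concretely, since $\chi=(1_H)\ind$, Frobenius reciprocity gives $\langle\phi,\chi^l\rangle=\langle\phi,\chi^{l-1}\cdot(1_H)\ind\rangle=\langle\phi\res\otimes(\chi^{l-1})\res,1_H\rangle$, and iterating this Mackey-style bookkeeping shows that $\langle\phi,\chi^l\rangle\ne0$ if and only if there is a sequence $1_H=\alpha_0,\alpha_1,\dots,\alpha_l=\phi\res$ of irreducible characters of $H$ such that consecutive terms are ``linked'' in the sense that $\langle\alpha_i\res,(\alpha_{i+1})\res\rangle$-type pairings through $G$ are nonzero --- i.e. $\alpha_i$ and $\alpha_{i+1}$ share a constituent after induction to $G$. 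In other words, $\langle\phi,\chi^l\rangle\ne0$ iff there is a walk of length $l$ from $1_H$ to $\phi\res$ in $\mathcal K(G,H)$. This is essentially the content of~\cite[Proposition 2.5]{burn} relating $\langle(1_H)\ind,(\alpha)\ind\rangle$ and path structure, adapted to the homomorphism $\phi$.

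First I would make the walk-counting precise: define the nonnegative integer matrix $M$ indexed by $\mathrm{Irr}(H)$ with $M_{\alpha\beta}=\langle\alpha\ind,\beta\ind\rangle$, observe that $M$ has the same zero pattern as (a positive multiple of) the adjacency matrix of $\mathcal K(G,H)$ together with loops at every vertex, and show by induction on $l$ that $(M^l)_{1_H,\psi}=\langle\chi^l, \psi\ind\rangle$ for every $\psi\in\mathrm{Irr}(H)$; specialising with the fact that $\langle\chi^l,\phi\rangle=\langle\chi^l,\phi\res\!\ind\rangle$ is controlled by the $(1_H,\phi\res)$ entry. Since $M$ has loops everywhere, $(M^l)_{1_H,\phi\res}\ne0$ precisely when $l\ge d(1_H,\phi\res)$. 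Combined with~\eqref{charform1} this yields $b(G)=d(1_H,\phi\res)+1$.

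For the other equality, $b(G)=\mathrm{Diam}(\mathcal K(G,H))+1$, I would argue that $d(1_H,\phi\res)$ is in fact the diameter. The inequality $d(1_H,\phi\res)\le\mathrm{Diam}$ is trivial. For the reverse, I would use that $\phi$ is base-controlling to pin down which irreducible characters of $H$ are ``far'' from $1_H$: the base-controlling property says $\mathcal A$ is a base iff $\phi(G_{\mathcal A})=1$, which forces a strong dichotomy on constituents --- every $\psi\in\mathrm{Irr}(H)$ should satisfy $d(1_H,\psi)\le d(1_H,\phi\res)$, because otherwise one could find a tuple whose stabiliser supports a character ``beyond'' $\phi\res$, contradicting that $\phi$ alone detects bases. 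Here I expect to lean on the structural results already invoked for~\eqref{charform1} in~\cite{char}, perhaps re-deriving that $\chi^{b(G)}$ contains $\phi$ but $\chi^{b(G)-1}$ does not, and that every irreducible of $H$ appears as a constituent of $\chi^{b(G)-1}\!\res$ or similar, which bounds all distances by $b(G)-1$.

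The main obstacle will be this last step: showing $d(1_H,\phi\res)=\mathrm{Diam}(\mathcal K(G,H))$ rather than merely $\le$. It requires genuinely using the hypothesis that $\phi$ is base-controlling (not just an arbitrary linear character), to rule out the existence of some $\psi\in\mathrm{Irr}(H)$ lying strictly farther from $1_H$ than $\phi\res$ does. I would try to show that if such a $\psi$ existed, then $\langle\psi,\chi^l\rangle=0$ for all $l\le d(1_H,\phi\res)$ while $\psi$ nonetheless corresponds to some configuration of points; then using that bases of size $b(G)$ exist and are detected by $\phi$, derive that $\psi$ must actually be ``dominated'' by $\phi\res$ in the walk order, so $d(1_H,\psi)\le d(1_H,\phi\res)$ after all. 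The walk-counting reduction in the first two paragraphs is routine bookkeeping; the base-controlling-to-diameter implication is where the real work lies, and I would be prepared to invoke or adapt the depth-theoretic facts from~\cite{burn,bkk} to close it.
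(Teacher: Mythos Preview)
Your walk-counting reduction is correct and is essentially the paper's Proposition~\ref{pathprop} repackaged: the paper proves by induction along a path that $\langle\chi^k,\alpha_k\ind\rangle\ne0$, while you encode the same induction in the matrix identity $(M^l)_{1_H,\psi}=\langle\chi^l,\psi\ind\rangle$. (One small slip: $\langle\chi^l,\phi\rangle$ equals $\langle\chi^{l-1},\phi\res\ind\rangle$, not $\langle\chi^l,\phi\res\ind\rangle$; this is exactly what produces the ``$+1$''.) Combined with~\eqref{charform1}, this correctly gives $b(G)=d(1_H,\phi\res)+1$.

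The gap is in the diameter step, and you are making it much harder than it is. You propose to argue \emph{directly} that $d(1_H,\phi\res)=\mathrm{Diam}(\mathcal K(G,H))$ by showing every $\psi\in\mathrm{Irr}(H)$ satisfies $d(1_H,\psi)\le d(1_H,\phi\res)$, leaning on the base-controlling hypothesis. Two problems: first, even if that worked it would only bound the eccentricity of $1_H$, not distances between two arbitrary vertices $\alpha,\beta$, so it would not give the full diameter; second, the sketch you give (``$\psi$ corresponds to some configuration of points'', ``$\psi$ must be dominated by $\phi\res$ in the walk order'') does not translate into an actual argument --- irreducible characters of $H$ other than $1_H$ and $\phi\res$ have no direct interpretation in terms of point-tuples or bases, so the base-controlling hypothesis gives you no grip on them.

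The paper avoids all of this by observing that the needed inequality is already in the literature and holds for \emph{every} finite permutation group, with no base-controlling assumption: \cite[Corollary~1.4]{fkr} gives $b(G)\ge\mathrm{Diam}(\mathcal K(G,H))+1$ in general. Then the chain
\[
b(G)\ \ge\ \mathrm{Diam}(\mathcal K(G,H))+1\ \ge\ d(1_H,\phi\res)+1\ \ge\ b(G)
\]
closes immediately, the last inequality being your (correct) walk-counting argument together with~\eqref{charform1}. So the base-controlling hypothesis is used only for the final inequality, and the ``hard direction'' you were worried about is the one that does not need it.
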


Since the sign character is base-controlling for $G=\mathrm{S}_{n,k}$ (see~\cite[Section 3]{char} for details) we immediately deduce the following.

\begin{cor}
    The Fritzsche--K\"ulshammer--Reiche Conjecture is true.
\end{cor}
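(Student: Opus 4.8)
The plan is to obtain the corollary as the special case of Theorem~\ref{mainthm} in which $G=\S_{n,k}$ and the base-controlling homomorphism $\phi$ is the sign character $\mathrm{sgn}$ of $\S_n$. Theorem~\ref{mainthm} supplies, for any finite permutation group admitting a base-controlling homomorphism, the two equalities $b(G)=\mathrm{Diam}(\mathcal{K}(G,H))+1=d(1_H,\phi\res)+1$; these become precisely the equalities of Conjecture~\ref{mainconj} on setting $\phi=\mathrm{sgn}$, since then $\phi\res=\mathrm{sgn}\res$. Thus the whole of the deduction reduces to a single point: verifying that $\mathrm{sgn}$ is base-controlling for $\S_{n,k}$. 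Everything else is inherited verbatim from the theorem.

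To verify this, I would describe the pointwise stabiliser $G_{\mathcal{A}}$ of an arbitrary tuple $\mathcal{A}=(A_1,\dots,A_l)$ of $k$-subsets of $\{1,\dots,n\}$. Call two points of $\{1,\dots,n\}$ equivalent when they belong to exactly the same members of $\mathcal{A}$, and call the resulting equivalence classes the \emph{atoms} of $\mathcal{A}$. An element $g\in\S_n$ fixes every $A_i$ setwise if and only if it preserves this equivalence, that is, if and only if it preserves each atom setwise. Hence $G_{\mathcal{A}}$ is the direct product $\S_{m_1}\times\cdots\times\S_{m_r}$ of symmetric groups on the atoms, where $m_1,\dots,m_r$ are the atom sizes.

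From this description both directions of the base-controlling condition are immediate. If $\mathcal{A}$ is a base then $G_{\mathcal{A}}=1$, so $\phi(G_{\mathcal{A}})=1$ trivially. Conversely, if $\mathcal{A}$ is not a base then some atom has size $m_i\geq2$, so $G_{\mathcal{A}}$ contains a transposition of two points lying in that atom; this transposition is an odd permutation, whence $\phi(G_{\mathcal{A}})\ne1$. Therefore $\mathcal{A}$ is a base if and only if $\phi(G_{\mathcal{A}})=1$, which is exactly the base-controlling property (alternatively one may simply cite~\cite[Section~3]{char}, where this is carried out). Applying Theorem~\ref{mainthm} with $\phi=\mathrm{sgn}$ now gives the corollary. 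There is no real obstacle remaining at this stage: all of the difficulty has already been absorbed into the proof of Theorem~\ref{mainthm}, and the present argument is a direct substitution.
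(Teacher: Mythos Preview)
Your proposal is correct and follows essentially the same approach as the paper: the paper simply observes that the sign character is base-controlling for $\S_{n,k}$ (citing \cite[Section~3]{char}) and then deduces the corollary immediately from Theorem~\ref{mainthm}. Your version differs only in that you also spell out the atom argument for why $\mathrm{sgn}$ is base-controlling rather than merely citing it, which is a harmless elaboration rather than a different route.
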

Thus, we obtain a third formula for the base size of $\mathrm{S}_{n,k}$.

The structure of this paper is straightforward. In Section~\ref{sec2} we present a proof of Theorem~\ref{mainthm}, and in Section~\ref{sec3} we work through a couple of easy examples. Throughout, we assume familiarity with some basic concepts in character theory, and refer the reader to~\cite{MR450380} for the necessary background.

\section{Proof of Theorem~\ref{mainthm}}\label{sec2}
Throughout this section $G$ is a finite permutation group with base-controlling homomorphism $\phi$ and point stabiliser $H$, and $\chi=1_H\ind$ is the permutation character of $G$. Before stating our key proposition we remind the reader of a standard result of character theory which will be very useful for us: if $\alpha$ is any character of $G$, then \begin{equation}\label{fact}
\alpha\res\ind=\alpha\cdot\chi.
\end{equation}
The equality~\eqref{fact} is a special case of~\cite[Chapter 7.2, Remark (3)]{MR450380}.
\begin{prop}\label{pathprop}
Let $1_H,\alpha_1,\alpha_2,\dots,\alpha_m$ be a path in $\mathcal{K}(G,H)$. Then $\langle\chi^k,\alpha_k\ind\rangle\ne0$ for all $1\leq k\leq m$.
\end{prop}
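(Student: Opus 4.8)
The natural approach is induction on $k$. The base case $k=1$ says: if $1_H$ is adjacent to $\alpha_1$ in $\mathcal{K}(G,H)$, then $\langle \chi, \alpha_1\ind\rangle \neq 0$. By definition of the Külshammer graph, adjacency of $1_H$ and $\alpha_1$ means $1_H\ind = \chi$ and $\alpha_1\ind$ share an irreducible constituent; since $\chi$ has nonnegative integer multiplicities and shares a constituent with $\alpha_1\ind$, we get $\langle \chi, \alpha_1\ind\rangle \neq 0$ immediately.

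For the inductive step, suppose $\langle \chi^{k-1}, \alpha_{k-1}\ind\rangle \neq 0$ and that $\alpha_{k-1}$ is adjacent to $\alpha_k$. I want to deduce $\langle \chi^k, \alpha_k\ind\rangle \neq 0$. The key tool is the identity \eqref{fact}: $\alpha\res\ind = \alpha\cdot\chi$ for any character $\alpha$ of $G$. The plan is to pass through the edge from $\alpha_{k-1}$ to $\alpha_k$: adjacency means $\alpha_{k-1}\ind$ and $\alpha_k\ind$ have a common irreducible constituent $\psi$ of $G$. Then using Frobenius reciprocity, $\langle \alpha_k\ind, \psi\rangle = \langle \alpha_k, \psi\res\rangle \neq 0$, so $\alpha_k$ is a constituent of $\psi\res$. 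Now I compute, using \eqref{fact} and the fact that $\chi^k = \chi^{k-1}\cdot\chi$:
\begin{align}
\langle \chi^k, \alpha_k\ind\rangle &= \langle \chi^{k-1}\cdot\chi, \alpha_k\ind\rangle = \langle \chi^{k-1}, \overline{\chi}\cdot\alpha_k\ind\rangle.
\end{align}
Since $\chi$ is a permutation character it is real, so $\overline{\chi} = \chi$, and $\chi\cdot(\alpha_k\ind) = (\alpha_k\ind\res)\ind$ by \eqref{fact} applied with the character $\alpha_k\ind$ of $G$. The idea is then to extract from $\alpha_k\ind\res$ a suitable subcharacter: since $\psi$ is a constituent of $\alpha_k\ind$, the character $\psi\res$ is a subcharacter of $\alpha_k\ind\res$, and $\alpha_{k-1}$ is a constituent of $\psi\res$ (by the edge, via reciprocity as above, using that $\psi$ is also a constituent of $\alpha_{k-1}\ind$). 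Hence $\alpha_{k-1}\ind$ is a subcharacter of $\psi\res\ind$, which is a subcharacter of $\alpha_k\ind\res\ind = \chi\cdot\alpha_k\ind$. Therefore
\begin{align}
\langle \chi^k, \alpha_k\ind\rangle = \langle \chi^{k-1}, \chi\cdot\alpha_k\ind\rangle \geq \langle \chi^{k-1}, \alpha_{k-1}\ind\rangle \neq 0,
\end{align}
where the inequality uses that $\chi^{k-1}$ is a character (nonnegative combination of irreducibles) and $\alpha_{k-1}\ind$ is a subcharacter of $\chi\cdot\alpha_k\ind$, so the inner products compare term by term.

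The main obstacle is making the "subcharacter" bookkeeping rigorous: all of $\alpha_k\ind\res$, $\psi\res$, and the induced characters are genuine characters (nonnegative integer combinations of irreducibles), so inner products with a fixed character like $\chi^{k-1}$ are monotone under "is a subcharacter of." I need to be careful that the chain $\alpha_{k-1}\ind \leq \psi\res\ind \leq (\alpha_k\ind\res)\ind = \chi\cdot\alpha_k\ind$ genuinely holds as characters, which follows because induction and restriction send characters to characters and preserve the subcharacter relation. Once that is in place the computation is essentially forced by \eqref{fact} and Frobenius reciprocity; I do not anticipate any deeper difficulty, and the reality of $\chi$ is a triviality worth noting explicitly only to justify $\overline{\chi} = \chi$.
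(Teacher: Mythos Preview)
Your proof is correct and follows essentially the same approach as the paper: induction on $k$, using Frobenius reciprocity together with the identity $\alpha\res\ind=\alpha\cdot\chi$. The only organisational difference is that the paper works on the $H$-side, showing directly that $\alpha_k$ is a common irreducible constituent of $\chi^k\res$ and $\alpha_{k+1}\ind\res$, whereas you pass through a common constituent $\psi\in\mathrm{Irr}(G)$ and a chain of subcharacter inclusions $\alpha_{k-1}\ind\leq\psi\res\ind\leq(\alpha_k\ind)\res\ind=\chi\cdot\alpha_k\ind$; this is a mild detour but reaches the same conclusion by the same mechanism.
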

\begin{proof}
    We prove the result by induction. The result holds for $k=1$: Indeed, since $1_H$ and $\alpha_1$ are joined by an edge it follows that $0\ne\langle 1_H\ind,{\alpha_1\ind}\rangle=\langle \chi^1,\alpha_1\ind\rangle,$ as desired. Assume the result holds for some $k\geq 1$. Then $$0\ne\langle \chi^k,\alpha_k\ind\rangle=\langle\chi^k\res,\alpha_k\rangle$$ by Frobenius reciprocity. That is, $\alpha_k$ is an irreducible constituent of $\chi^k\res$. Additionally, there is an edge between $\alpha_k$ and $\alpha_{k+1}$, whence $$0\ne\langle\alpha_k\ind,\alpha_{k+1}\ind\rangle=\langle\alpha_k,\alpha_{k+1}\ind\res\rangle.$$ It follows that $\alpha_k$ is a common irreducible constituent of both $\alpha_{k+1}\ind\res$ and $\chi^k\res$, thus $$0\ne\langle\chi^k\res,\alpha_{k+1}\ind\res\rangle=\langle\chi^k\res\ind,\alpha_{k+1}\ind\rangle=\langle\chi^{k+1},\alpha_{k+1}\ind\rangle,$$ where the final equality is~\eqref{fact}, hence the result.
\end{proof}
We are now ready to prove Theorem~\ref{mainthm}.
\begin{proof}[Proof of Theorem~\ref{mainthm}]
In~\cite[Corollary 1.4]{fkr}, it is shown that $$b(G)\geq \mathrm{Diam}(\mathcal{K}(G,H))+1,$$ and moreover, it is clear that $d:=d(1_H,\phi\res)\leq\mathrm{Diam}(\mathcal{K}(G,H)),$ so it suffices to show that $b(G)\leq d+1$.

Since $\phi$ is linear, it follows that $\phi\res\in\mathrm{Irr}(H)$. Thus, by Proposition~\ref{pathprop}, $$0\ne\langle \chi^{d},\phi\res\ind\rangle=\langle \chi^{d}\res,\phi\res\rangle=\langle \chi^{d}\res\ind,\phi\rangle=\langle \chi^{d+1},\phi\rangle,$$ by repeated applications of Frobenius reciprocity and~\eqref{fact}. Finally, since $\langle \chi^{d+1},\phi\rangle\ne 0$, we deduce from~\eqref{charform1} that $b(G)\leq d+1$, as was to be shown.
\end{proof}

\section{Other examples}\label{sec3}
In this section we present a couple of easy worked examples, demonstrating the utility of Theorem~\ref{mainthm} beyond the groups $\mathrm{S}_{n,k}$. 

First we let $G=\mathrm{PGL}_2(7)$ with its natural action on the 1-dimensional subspaces of the natural module $\mathrm{GF}(7)^2$. Then $G$ has point stabiliser $H=7\cn6$, and $G$ admits a base controlling homomorphism $\phi$ (see~\cite[Section 3]{char} for details). The stabiliser $H$ has 7 irreducibles characters which we label as $1_H,\phi\res,\alpha_1,\alpha_2,\alpha_3,\alpha_4,\alpha_5$. Consider the graph $\mathcal{K}(G,H)$, which is depicted in Figure~\ref{fig:KGH}. It is clear that the graph has diameter 2, and moreover, the distance between $1_H$ and $\phi\res$ is indeed $2$. This agrees with what we expect since $G$ is sharply 3-transitive and thus has base size 3.
    \begin{figure}[h]
        \centering
        \includegraphics[width=0.4\linewidth]{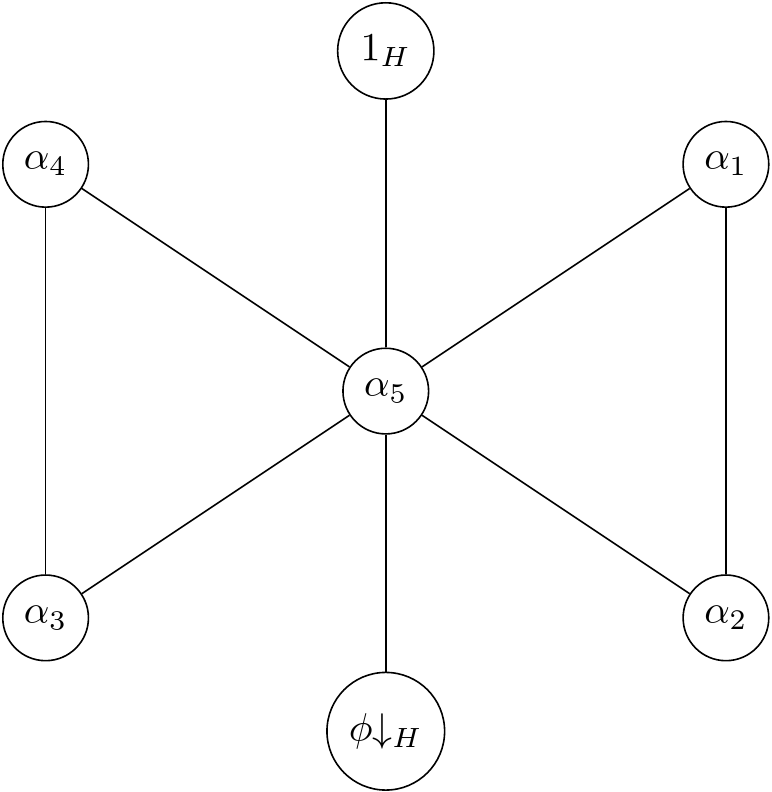}
        \caption{The graph $\mathcal{K}(\mathrm{PGL}_2(7),7\cn6)$.}
        \label{fig:KGH}
    \end{figure}

    We conclude the paper with a final example. Let $G=\mathrm{D}_{2n}$ be a dihedral group of degree $n\geq 2$ equipped with its natural action and point stabiliser $H$. Then $H$ has exactly two irreducible characters, and it is straightforward to check that the unique non-trivial character is the restriction to $H$ of a base-controlling homomorphism $\phi$ of $G$. Since $\mathcal{K}(G,H)$ is necessarily connected, we deduce that $\mathrm{Diam}(\mathcal{K}(G,H))+1=d(1_H,\phi\res)+1=2$, which agrees with the expected base size.
\bibliographystyle{plain}
\vspace{-0.3cm}
\bibliography{biblio}

\end{document}